\documentclass[12pt,a4paper]{article}
\usepackage{graphicx}
\usepackage[margin=1in]{geometry}
\usepackage{amsmath,amsthm,amssymb,bbm,color}
\usepackage{cite}
\usepackage{hyperref}
\hypersetup{colorlinks=true}

\newtheorem{thm}{Theorem}[section]
\newtheorem{lemma}[thm]{Lemma}
\newtheorem{prop}[thm]{Proposition}

\renewcommand{\P}{\mathbf P}
\newcommand{\E}{\mathbf E}
\newcommand{\wt}{\widetilde}
\newcommand{\F}{\mathcal F}

\numberwithin{equation}{section}

\title{Moments of the superdiffusive elephant random walk with general step distribution}

\author{J\'ozsef Kiss\thanks{Department of Stochastics, Institute of Mathematics,
Budapest University of Technology and Economics, M\H uegyetem rkp.\ 3., H-1111 Budapest, Hungary. E-mail: {\tt kissj@math.bme.hu}}
\and B\'alint Vet\H o\thanks{Department of Stochastics, Institute of Mathematics,
Budapest University of Technology and Economics, M\H uegyetem rkp.\ 3., H-1111 Budapest, Hungary. E-mail: {\tt vetob@math.bme.hu}}
\thanks{ELKH--BME Stochastics Research Group, M\H uegyetem rkp.\ 3., H-1111 Budapest, Hungary}}

\begin{document}

\maketitle

\begin{abstract}
We consider the elephant random walk with general step distribution.
We calculate the first four moments of the limiting distribution of the position rescaled by $n^\alpha$ in the superdiffusive regime
where $\alpha$ is the memory parameter.
This extends the results obtained by Bercu in~\cite{Bercu17}.
\end{abstract}

\section{Introduction and results}
The elephant random walk (ERW) is a one-dimensional discrete-time random walk with memory.
With probability $\alpha$ the walker repeats one of its previous steps chosen uniformly at random
and with probability $1-\alpha$ the next step is sampled independently from the past where $\alpha\in[0,1]$ is the memory parameter.

The ERW was first introduced in 1993 by Drezner and Farnum~\cite{DF93} as correlated Bernoulli process
with Bernoulli step distribution and time-dependent memory parameter.
For the case of time-homogeneous memory parameter and Bernoulli step distribution it was proved in~\cite{H04}
that the behaviour shows a phase transition in the value of the memory parameter $\alpha$.
In the diffusive regime ($\alpha<1/2$) asymptotic normality is proved after diffusive scaling,
in the critical regime ($\alpha=1/2$) normality remains valid with a logarithmic correction in the scaling.
In the superdiffusive regime ($\alpha>1/2$), after scaling with $n^{\alpha}$, the limiting distribution is found to be non-degenerate.
It was also stated without proof that the limiting distribution is different from the normal distribution.
The proof uses the martingale which naturally appears in the problem.
In the case of general time-dependent memory parameter sufficient conditions for the law of large numbers,
central limit theorem and law of iterated logarithm were given in~\cite{JJQ08} using the martingale method.

The same model with $+1$ and $-1$ jumps was first named as elephant random walk in~\cite{ST04}
and the probability distribution of its position after $n$ steps was analysed.
The connection of the ERW with P\'olya-type urns was exploited in~\cite{BB16} to prove process convergence of the ERW trajectory using know results on urns.
The fact that the limiting distribution of the superdiffusive ERW is not Gaussian was first proved rigorously in~\cite{Bercu17}
by computing its first four moments using martingales.
New hypergeometric identities are obtained in~\cite{BCR19} by computing these moments in two different ways.
Number of zeros in the elephant random walk is analysed in~\cite{Bertoin22}.
The generalization when zero jumps are also allowed is called the delayed ERW, see~\cite{GS21,Bercu22}.
In~\cite{Bus18} the steps of the ERW are sampled from the $\beta$-stable distribution with parameter $\beta\in(0,2]$
and the phase transition in the memory parameter is proved to happen at the value $\alpha=1/\beta$ using the connection with random recursive trees.
In the superdiffusive regime the fluctuations after subtracting the non-Gaussian limit are proved to be normal in~\cite{KT19}.

In the present note we consider the ERW with general step distribution which is defined as follows.
Let $\alpha\in[0,1]$ be the memory parameter of the ERW.
Let $\xi_1,\xi_2,\dots$ be an arbitrary i.i.d.\ sequence of random variables with certain moment conditions imposed later.
We denote by $X_n$ the $n$th step of the random walk.
We suppose that the random walk starts from the origin, i.e., $S_0=0$.
The first step is $X_1=\xi_1$.
Every further step is defined as
\begin{equation}\label{defXn}
X_{n+1}=\begin{cases}X_K&\text{with probability }\alpha,\\\xi_{n+1}&\text{with probability }1-\alpha\end{cases}
\end{equation}
where the index $K$ has uniform distribution on the index set $\{1,2,\dots,n\}$,
that is, with probability $\alpha$ one of the previous steps is repeated and otherwise the step is an independent new sample from the step distribution.
Note that the steps $X_1,X_2,\dots$ are not independent but the walk has a long memory.
The position of the ERW is denoted by
\begin{equation}\label{defSn}
S_{n+1}=S_n+X_{n+1}.
\end{equation}
Let
\begin{equation}\label{defmkMk}
m_k=\E(\xi_1^k),\qquad M_k=\E\left((\xi_1-m_1)^k\right)
\end{equation}
for $k=1,2,\dots$ denote the moments and centered moments of the step distribution.

For the ERW with general step distribution the same phase transition appears in the value of the memory parameter $\alpha$ as for the original model
since the martingale method used in the majority of the previous literature on ERW extends naturally for our model as we explain below.
We believe that the proof of the law of large numbers and central limit theorem in the diffusive and critical regime
survives for the case of general step distribution after appropriate modifications with the same Gaussian limits.
Hence we focus on the most exciting superdiffusive regime in the present note where the limiting distribution is different from Gaussian.
Our main results for the superdiffusive ERW with general step distribution are the following.

\begin{thm}\label{thm:conv}
\begin{enumerate}
\item
Let $(S_n)$ denote the elephant random walk with memory parameter $\alpha$.
Assume that $\alpha\in(1/2,1]$, that is, we consider the superdiffusive regime.
Suppose that the step distribution has finite variance, that is, $m_2<\infty$.
Then
\begin{equation}\label{Qas}
\lim_{n\to\infty}\frac{S_n-nm_1}{n^\alpha}=Q
\end{equation}
almost surely with some non-degenerate random variable $Q$.

\item
Let $p$ be a positive even integer.
Assume that the $p$th absolute moment of the step distribution is finite, that is, $m_p<\infty$.
Then the above convergence is also true in $L^p$ which means that
\begin{equation}\label{QLp}
\lim_{n\to\infty}\E\left(\left|\frac{S_n-nm_1}{n^\alpha}-Q\right|^p\right)=0.
\end{equation}
\end{enumerate}
\end{thm}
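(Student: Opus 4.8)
The plan is to realize $(S_n-nm_1)/n^\alpha$ as a deterministic rescaling of a martingale and then to control all of its moments by a single inductive scheme. Write $T_n=S_n-nm_1=\sum_{k=1}^n(X_k-m_1)$ and $\F_n=\sigma(X_1,\dots,X_n)$. From \eqref{defXn} one computes $\E\big(X_{n+1}-m_1\mid\F_n\big)=\tfrac\alpha n T_n$, hence $\E(T_{n+1}\mid\F_n)=\tfrac{n+\alpha}{n}T_n$, so that with $a_n=\prod_{k=1}^{n-1}\tfrac{k}{k+\alpha}=\tfrac{\Gamma(1+\alpha)\Gamma(n)}{\Gamma(n+\alpha)}\sim\Gamma(1+\alpha)\,n^{-\alpha}$ the process $M_n:=a_nT_n$ is a martingale. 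Since $a_n n^\alpha\to\Gamma(1+\alpha)$, it suffices to prove that $M_n$ converges almost surely, and in $L^p$ when $m_p<\infty$; one then takes $Q:=M_\infty/\Gamma(1+\alpha)$.

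To reach the moments I would introduce, for each integer $j\ge1$, the quantities $\Sigma_n^{(j)}=\sum_{k=1}^n(X_k-m_1)^j$ and $Y_n^{(j)}=\Sigma_n^{(j)}-M_jn$; note $M_1=0$, so $Y_n^{(1)}=\Sigma_n^{(1)}=T_n$. Since $X_{n+1}$ copies a uniformly chosen past step with probability $\alpha$ and is an independent fresh sample otherwise, $\E\big((X_{n+1}-m_1)^j\mid\F_n\big)=\tfrac\alpha n\Sigma_n^{(j)}+(1-\alpha)M_j$, and a short computation then gives $\E(Y_{n+1}^{(j)}\mid\F_n)=\tfrac{n+\alpha}{n}Y_n^{(j)}$; hence $a_nY_n^{(j)}$ is again a martingale and $\E(Y_n^{(j)})=0$ for every $n$. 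Writing $Y_{n+1}^{(j_i)}=Y_n^{(j_i)}+(X_{n+1}-m_1)^{j_i}-M_{j_i}$, expanding $\prod_{i=1}^r Y_{n+1}^{(j_i)}$, and taking conditional expectations, one obtains for any tuple $(j_1,\dots,j_r)$ with $j_1+\dots+j_r\le p$ a recursion of the form
\[
\E\Big(\textstyle\prod_{i=1}^r Y_{n+1}^{(j_i)}\Big)=\Big(1+\tfrac{r\alpha}{n}\Big)\,\E\Big(\textstyle\prod_{i=1}^r Y_n^{(j_i)}\Big)+A_n+\tfrac1n B_n,
\]
where $A_n$ is a linear combination, with coefficients independent of $n$, of expectations of products of at most $r-2$ of the variables $Y_n^{(\cdot)}$, and $B_n$ one of products of at most $r-1$ of them, all superscripts being positive integers summing to at most $p$ within each product, so that $m_p<\infty$ keeps every term finite.

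Next I would prove, by induction on $r$, that $\E\big(\prod_{i=1}^rY_n^{(j_i)}\big)=O(n^{r\alpha})$ for all such tuples; the cases $r=0,1$ are immediate, the second because $\E(Y_n^{(j)})=0$. For the inductive step the hypothesis gives $A_n=O(n^{(r-2)\alpha})$ and $\tfrac1nB_n=O(n^{(r-1)\alpha-1})$, so the inhomogeneous part of the recursion is $O(n^{(r-2)\alpha})$; dividing by a positive solution $y_n\asymp n^{r\alpha}$ of the homogeneous recursion turns the forcing into a term of order $n^{-2\alpha}$, and $\sum_k k^{-2\alpha}<\infty$ precisely because $\alpha>1/2$, giving $\E\big(\prod Y_n^{(j_i)}\big)=O(n^{r\alpha})$. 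Taking all $j_i=1$ and $r=p$ yields $\E(T_n^p)=O(n^{p\alpha})$, i.e.\ $\sup_n\E(|M_n|^p)<\infty$. When $r=2$ and $p=2$ the recursion becomes explicit, $\E(T_{n+1}^2)=(1+\tfrac{2\alpha}n)\E(T_n^2)+M_2$, whence $\E(T_n^2)\sim A'n^{2\alpha}$ with a constant $A'>0$ provided $M_2>0$ (if $M_2=0$ the walk is deterministic and $Q\equiv0$).

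Finally I would assemble the pieces. Boundedness of $(M_n)$ in $L^2$ gives, by the martingale convergence theorem, $M_n\to M_\infty$ almost surely and in $L^2$, which proves \eqref{Qas} with $Q=M_\infty/\Gamma(1+\alpha)$; moreover $\E(Q)=0$ and $\E(Q^2)=A'/\Gamma(1+\alpha)^2>0$, so $Q$ is non-degenerate. For part~(2), $\sup_n\E(|M_n|^p)<\infty$ together with Doob's $L^p$ maximal inequality shows $\sup_n|M_n|\in L^p$, hence $M_n\to M_\infty$ in $L^p$ by dominated convergence; combining this with $a_n n^\alpha\to\Gamma(1+\alpha)$ gives \eqref{QLp}. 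The main obstacle is the middle step: writing down the recursion for the \emph{joint} moments $\E\big(\prod Y_n^{(j_i)}\big)$ and verifying that its inhomogeneous terms always involve strictly fewer factors, so that the induction on $r$ closes and the decisive $n^{-2\alpha}$ gain---summable exactly in the superdiffusive regime---appears. Carrying the entire family $Y_n^{(j)}$, rather than $T_n$ alone, is what makes this bookkeeping possible, since the recursion for $\E(T_n^p)$ by itself does not close.
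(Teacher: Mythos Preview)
Your argument is correct, but it takes a genuinely different route from the paper. The paper avoids the auxiliary family $Y_n^{(j)}$ altogether: writing the martingale increments as $\varepsilon_k=X_k-\E(X_k\mid\F_{k-1})$, it first proves the elementary uniform bound $\E(|\varepsilon_n|^p)\le 2^p\E(|\xi_1|^p)$ (just Minkowski plus Jensen), then expands $Q_{n+1}^p=(Q_n+a_{n+1}\varepsilon_{n+1})^p$ and controls the cross terms $\E\big(\E(\varepsilon_{n+1}^k\mid\F_n)\,Q_n^{p-k}\big)$ by H\"older's inequality. This yields a closed recursive \emph{inequality}
\[
\E(Q_{n+1}^p)\le\bigl(1+c\,a_{n+1}^2\bigr)\E(Q_n^p)+c\,a_{n+1}^2,
\]
and since $a_{n+1}^2\asymp n^{-2\alpha}$ is summable for $\alpha>1/2$, $L^p$ boundedness follows at once. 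So the paper's key observation is that the exact recursion for $\E(T_n^p)$ need not close: a uniform $L^p$ bound on the increments plus H\"older is enough to get a self-contained inequality in the single quantity $\E(Q_n^p)$.

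Your approach instead tracks the full system of mixed moments $\E\bigl(\prod_i Y_n^{(j_i)}\bigr)$ through exact recursions and an induction on the number $r$ of factors; the $n^{-2\alpha}$ summability appears when you divide by the homogeneous solution. This is heavier bookkeeping, but it is precisely the machinery the paper deploys later (Section~\ref{s:moments}) to compute the limiting moments of $Q$ explicitly. Thus your method, once set up, gives more than boundedness: it already delivers the leading-order asymptotics that feed into Theorem~\ref{thm:moments}, whereas the paper's shortcut buys only the qualitative $L^p$ bound and must redo the exact recursions separately for the moment formulas.
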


\begin{thm}\label{thm:moments}
Assume that the step distribution of the elephant random walk $(S_n)$ has finite fourth moment, that is, $m_4<\infty$.
Then the first four moments of the random variable $Q$ which arises as the limits in \eqref{Qas}--\eqref{QLp} are given by
\begin{align}
\E(Q)&=0,\label{Q1}\\
\E\left(Q^2\right)&=\frac{M_2}{(2\alpha-1)\Gamma(2\alpha)},\label{Q2}\\
\E\left(Q^3\right)&=\frac{4M_3}{(3\alpha-1)\Gamma(3\alpha)},\label{Q3}\\
\E\left(Q^4\right)&=\frac{6(3(2\alpha-1)^2M_4+2(1-\alpha)(5\alpha-2)M_2^2)}{(2\alpha-1)^2(4\alpha-1)\Gamma(4\alpha)}.\label{Q4}
\end{align}
\end{thm}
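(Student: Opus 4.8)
The plan is to compute the moments of $Q$ by first computing the asymptotics of $\E(S_n^k)$ (more precisely, of the centered quantities $\E((S_n-nm_1)^k)$) for $k=1,2,3,4$, and then using the $L^p$ convergence from Theorem~\ref{thm:conv}(2) to pass from $n^{-k\alpha}\E((S_n-nm_1)^k)$ to $\E(Q^k)$. The natural tool is recursion: conditioning on $\mathcal F_n=\sigma(X_1,\dots,X_n)$ and using~\eqref{defXn}, one has $\E(X_{n+1}\mid\mathcal F_n)=\frac{\alpha}{n}S_n+(1-\alpha)m_1$, and more generally the conditional moments $\E(X_{n+1}^j\mid\mathcal F_n)$ are affine in the empirical moments $\frac1n\sum_{i=1}^n X_i^j$. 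I would set $Y_n=S_n-nm_1$ and derive, for each $k$, a recursion of the form $\E(Y_{n+1}^k\mid\mathcal F_n)=\left(1+\frac{k\alpha}{n}\right)Y_n^k+(\text{lower-order-in-}Y_n\text{ terms})$, take expectations, and solve the resulting linear recursion for $a_n^{(k)}:=\E(Y_n^k)$.

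The cleanest way to organize this, following Bercu~\cite{Bercu17}, is through the martingale structure: the process $M_n=\gamma_n Y_n$ is a martingale for a suitable deterministic sequence $\gamma_n$ with $\gamma_n\sim n^{-\alpha}/\Gamma(...)$ (one checks $\gamma_{n+1}/\gamma_n=n/(n+\alpha)$, so $\gamma_n=\Gamma(n)\Gamma(1+\alpha)/\Gamma(n+\alpha)\sim \Gamma(1+\alpha)\, n^{-\alpha}$). Then $Q=\lim M_n/\Gamma(1+\alpha)$ in $L^p$, so $\E(Q^k)=\lim_n \E(M_n^k)/\Gamma(1+\alpha)^k$, and it suffices to find the leading-order growth of $\E(M_n^k)=\gamma_n^k a_n^{(k)}$, equivalently the constant $c_k$ with $a_n^{(k)}\sim c_k\, n^{k\alpha}$. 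For $k=1$ this is immediate ($a_n^{(1)}=0$, giving~\eqref{Q1}). For $k=2,3,4$ I would expand $\E(Y_{n+1}^k\mid\mathcal F_n)$ using the binomial theorem and the conditional-moment formulas, take expectations, and obtain a recursion $a_{n+1}^{(k)}=\left(1+\frac{k\alpha}{n}\right)a_n^{(k)}+R_n^{(k)}$ where $R_n^{(k)}$ is an explicit combination of lower moments $a_n^{(j)}$, $j<k$, the constants $M_2,M_3,M_4$, $\alpha$, and powers of $n$. Solving such a recursion is standard: the solution is $a_n^{(k)}=\gamma_n^{-k}\sum_{\ell} \gamma_\ell^{k} R_{\ell-1}^{(k)}$ (up to the homogeneous part, which vanishes by the initial condition), and the leading asymptotics come from the dominant term in $R_\ell^{(k)}$ via the elementary estimate $\sum_{\ell\le n}\ell^{\beta}\sim n^{\beta+1}/(\beta+1)$.

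Concretely: for $k=2$, the recursion for $a_n^{(2)}=\E(Y_n^2)$ has inhomogeneity governed by $\E(X_{n+1}^2\mid\mathcal F_n)$, which after centering produces a term of order $n\cdot(\text{const})$ with the constant involving $M_2$; matching gives $a_n^{(2)}\sim \frac{M_2}{2\alpha-1}n^{2\alpha-1}\cdot n = \frac{M_2}{2\alpha-1}n^{2\alpha}$ up to the $\Gamma$-factors, yielding~\eqref{Q2}. For $k=3$, the inhomogeneity involves both $a_n^{(2)}$ (through a $\binom32 Y_n^2\cdot\E(X_{n+1}\mid\mathcal F_n)$-type term) and the new centered third moment $M_3$; the computation produces the factor $4$ and the $(3\alpha-1)$ in~\eqref{Q3}. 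For $k=4$, the inhomogeneity couples to $a_n^{(2)}$, $a_n^{(3)}$ and involves both $M_4$ and $M_2^2$ (the latter entering quadratically through cross terms like $(\E(X_{n+1}^2\mid\mathcal F_n))$ combined with the already-known growth of $a_n^{(2)}$), leading to the two-term numerator in~\eqref{Q4}; the subleading $M_2^2$ coefficient $2(1-\alpha)(5\alpha-2)$ and the $(2\alpha-1)^2(4\alpha-1)$ in the denominator are the bookkeeping-heavy part. I expect the main obstacle to be precisely this $k=4$ case: one must carefully track which terms in the binomial expansion of $\E(Y_{n+1}^4\mid\mathcal F_n)$ contribute at the leading order $n^{4\alpha}$ — in particular, the product of the $Y_n^2$ prefactor with the $O(1/n)$ conditional-variance term interacts with $a_n^{(2)}\sim n^{2\alpha}$ to produce an $n^{2\alpha-1}\cdot n^{2\alpha}$-type contribution that is of the same order as the naive $M_4$ term, and keeping the exact rational constants (rather than just the leading power of $n$) requires either solving the recursion exactly in terms of ratios of Gamma functions or doing a careful two-term asymptotic expansion. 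Throughout, the passage from $\E(Y_n^k)/n^{k\alpha}\to\E(Q^k)$ is justified by the $L^k$ convergence in~\eqref{QLp} together with uniform integrability, so no separate tightness argument is needed.
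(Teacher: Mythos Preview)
Your overall strategy---derive recursions for the centered moments $\E(Y_n^k)$ with $Y_n=S_n-nm_1$, solve them, and pass to the limit via the $L^p$ convergence---is exactly the paper's approach. However, there is a genuine gap in your plan for $k\ge 3$: the claim that the inhomogeneity $R_n^{(k)}$ is ``an explicit combination of lower moments $a_n^{(j)}$, $j<k$'' is false for a general step distribution. When you expand $\E(Y_{n+1}^k\mid\F_n)$, the conditional moments $\E\big((X_{n+1}-m_1)^j\mid\F_n\big)$ for $j\ge 2$ are affine not in $Y_n$ but in the centered empirical \emph{power} sums
\[
\wt T_n=\sum_{i=1}^n X_i^2-nm_2,\qquad \wt U_n=\sum_{i=1}^n X_i^3-nm_3.
\]
For instance $\E\big((X_{n+1}-m_1)^2\mid\F_n\big)=\tfrac{\alpha}{n}\wt T_n-\tfrac{2\alpha}{n}m_1 Y_n+M_2$, so already the third-moment recursion reads
\[
\E(Y_{n+1}^3)=\Big(1+\tfrac{3\alpha}{n}\Big)\E(Y_n^3)+\tfrac{3\alpha}{n}\,\E(Y_n\wt T_n)-\tfrac{6\alpha}{n}m_1\,\E(Y_n^2)+M_3,
\]
and the mixed moment $\E(Y_n\wt T_n)$ is not one of your $a_n^{(j)}$. (In Bercu's $\pm 1$ case $X_i^2\equiv 1$ and $X_i^3=X_i$, so $\wt T_n=0$ and $\wt U_n=Y_n$ and the system \emph{does} close; this is precisely what fails for general steps.)

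The paper's remedy is to enlarge the system: it tracks the seven quantities $\E(\wt S_n^2)$, $\E(\wt S_n\wt T_n)$, $\E(\wt S_n^3)$, $\E(\wt S_n\wt U_n)$, $\E(\wt T_n^2)$, $\E(\wt S_n^2\wt T_n)$, $\E(\wt S_n^4)$, writes down closed coupled recursions for them (Proposition~\ref{prop:recursions}), and solves them sequentially as exact Gamma-ratio expressions (Proposition~\ref{prop:recsolutions}) before reading off the leading coefficients. You will need these auxiliary mixed moments to make your argument go through; in particular the factor $4$ in \eqref{Q3} and the $(1-\alpha)(5\alpha-2)M_2^2$ term in \eqref{Q4} only emerge after the full system is solved. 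Your remark that ``keeping the exact rational constants\dots requires either solving the recursion exactly in terms of ratios of Gamma functions'' is correct and is what the paper does, via a telescoping identity for $\sum_j \Gamma(j+a)/\Gamma(j+b)$ (Lemma~\ref{lemma:gammasum}).
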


Theorem~\ref{thm:conv} follows from the application of the martingale method to the case of general step distribution.
The almost sure convergence in \eqref{Qas} was already proved in Theorem 1.1 of~\cite{Bertenghi21}.
The $L^p$ convergence was established for $p=2$ in Theorem 3.2 of~\cite{Bertoin21b}
and for general $p$ in Theorem 2.2 of~\cite{BCR19} for the standard elephant random walk.
See also \cite{Bertoin21a} for other generalizations of these convergence results.
We provide a simple and elementry proof of the almost sure and $L^p$ convergence results of Theorem~\ref{thm:conv} in Section~\ref{s:conv}
which relies on proving the $L^p$ boundedness of the natural martingale if the step distribution has a finite $p$th absolute moment.
In particular we prove the $L^p$ boundedness of a sequence of martingale differences in Lemma~\ref{lemma:Lpbound}.

Theorem~\ref{thm:moments} is proved in Section~\ref{s:moments} by solving the recursions for the mixed moments of the centered ERW.
The moments in \eqref{Q1}--\eqref{Q4} generalize the formulas found in~\cite{Bercu17} in the case of symmetric first step.
We mention that higher moments of $Q$ could in principle be determined using the method presented here but the recursions are much more complicated
beyond the fourth moment.

\paragraph{Acknowledgments.}
The work of the authors was supported by the NKFI (National Research, Development and Innovation Office) grant FK123962.
B.\ Vet\H o was supported by the Bolyai Research Scholarship of the Hungarian Academy of Sciences
and by the \'UNKP--21--5 New National Excellence Program of the Ministry for Innovation and Technology
from the source of the National Research, Development and Innovation Fund.

\section{Martingale method and convergence}
\label{s:conv}

We assume that the first two moments of the step distribution are finite.
Let
\begin{equation}\label{tS_n}
\wt S_n=S_n-nm_1=\sum_{k=1}^n X_k-nm_1
\end{equation}
denote the centered ERW.
Then by the definition \eqref{defXn} we have for any $n=1,2,\dots$ that
\begin{equation}\label{Xcondexp}
\E(X_{n+1}-m_1|\mathcal{F}_n)=\frac{\alpha}{n}\wt S_n
\end{equation}
where $\F_n=\sigma(X_1,\dots,X_n)$ is the natural filtration.
As a consequence,
\begin{equation}
\E(\wt S_{n+1}|\mathcal{F}_n)=\left(1+\frac\alpha n\right)\wt S_n
\end{equation}
holds and the process
\begin{equation}
Q_n=a_n\wt S_n
\end{equation}
is a martingale with respect to $\F_n$ where the sequence $(a_n)$ is given by
\begin{equation} \label{aenn}
a_n=\Gamma(1+\alpha)^{-1}\prod_{k=1}^{n-1}\left(1+\frac\alpha k\right)^{-1}=\frac{\Gamma(n)}{\Gamma(n+\alpha)}\sim n^{-\alpha}
\end{equation}
as $n\to\infty$ with the empty product understood to be equal to $1$ in the definition of $a_1=\Gamma(1+\alpha)^{-1}$.
We mention that our definition \eqref{aenn} of $a_n$ compared to the literature is simplified by a factor $\Gamma(1+\alpha)$, see e.g.~\cite{Bercu17}.

The martingale $(Q_n)$ can be written as
\begin{equation}\label{Qnsumrepr}
Q_n=\sum_{k=1}^na_k\varepsilon_k
\end{equation}
where $\varepsilon_1=X_1-m_1$ and for all $k=2,3,\dots$,
\begin{equation}\label{defeps}
\varepsilon_k=\wt S_k-\left(1+\frac\alpha{k-1}\right)\wt S_{k-1}=X_k-\E(X_k|\mathcal F_{k-1}).
\end{equation}

\begin{lemma}\label{lemma:Lpbound}
Let $p$ be a positive integer and assume that the $p$th absolute moment of the step distribution is finite.
Then the martingale differences $(\varepsilon_n)$ are bounded in $L^p$ and
\begin{equation}\label{Lpbound}
\sup_{n\ge1}\E(|\varepsilon_n|^p)\le2^p\E(|\xi_1|^p).
\end{equation}
\end{lemma}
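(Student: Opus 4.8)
The plan is to bound $|\varepsilon_n|$ pointwise by a simple expression whose $L^p$ norm is controlled by $\E(|\xi_1|^p)$. Recall from \eqref{defeps} that $\varepsilon_n = X_n - \E(X_n|\F_{n-1})$ for $n\ge2$, and $\varepsilon_1 = X_1 - m_1$. The key structural observation is that, conditionally on $\F_{n-1}$, the step $X_n$ either equals one of the past steps $X_1,\dots,X_{n-1}$ (with probability $\alpha$, uniform choice) or equals the fresh sample $\xi_n$ (with probability $1-\alpha$). In either case $X_n$ is, marginally and in fact jointly, distributed as one of the $\xi_j$'s: each $X_k$ is equal in distribution to $\xi_1$, since an easy induction shows $X_k$ is always a copy of some $\xi_j$. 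Hence $\E(|X_n|^p) = \E(|\xi_1|^p) = m_p$ for every $n$, and likewise $\E(|X_n| \,\big|\, \F_{n-1})$ and higher conditional absolute moments are finite.

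The main step is then the triangle inequality in $L^p$: writing $\varepsilon_n = X_n - \E(X_n|\F_{n-1})$, we get
\begin{equation}\label{triangleLp}
\E(|\varepsilon_n|^p)^{1/p} \le \E(|X_n|^p)^{1/p} + \E\left(\left|\E(X_n|\F_{n-1})\right|^p\right)^{1/p}.
\end{equation}
For the first term, $\E(|X_n|^p) = m_p = \E(|\xi_1|^p)$ as noted. For the second, conditional Jensen gives $|\E(X_n|\F_{n-1})|^p \le \E(|X_n|^p \mid \F_{n-1})$, so taking expectations yields $\E(|\E(X_n|\F_{n-1})|^p) \le \E(|X_n|^p) = \E(|\xi_1|^p)$. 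Substituting both bounds into \eqref{triangleLp} gives $\E(|\varepsilon_n|^p)^{1/p} \le 2\,\E(|\xi_1|^p)^{1/p}$, i.e.\ $\E(|\varepsilon_n|^p) \le 2^p\,\E(|\xi_1|^p)$, which is exactly \eqref{Lpbound}. The case $n=1$ is handled the same way (or more directly: $\varepsilon_1 = X_1 - m_1 = \xi_1 - m_1$, and $\E(|\xi_1 - m_1|^p) \le 2^p \E(|\xi_1|^p)$ by convexity of $t\mapsto|t|^p$, since $|a-b|^p \le 2^{p-1}(|a|^p+|b|^p)$ and $|m_1|^p = |\E\xi_1|^p \le \E|\xi_1|^p$). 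Since the bound is uniform in $n$, taking the supremum completes the proof.

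The only point requiring a little care — and the step I expect to be the mild obstacle — is justifying that $X_n \overset{d}{=} \xi_1$ and that the conditional moments are well-defined; this is where the recursive definition \eqref{defXn} must be unwound. One clean way: prove by induction on $n$ that for every $n$ there is an (almost surely defined, $\F_n$-measurable) random index $J_n \in \{1,\dots,n\}$ with $X_n = \xi_{J_n}$, and that $J_n$ is independent of $\xi_{J_n}$ in the appropriate sense so that $X_n \overset{d}{=} \xi_1$; alternatively, one simply notes $\E(|X_n|^p) = \E(|\xi_{J_n}|^p) = \E(|\xi_1|^p)$ by conditioning on the realized index. Once this distributional identity is in hand, everything else is the two-line application of the triangle and conditional Jensen inequalities above.
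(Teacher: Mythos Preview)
Your proof is correct and follows essentially the same route as the paper: establish $\E(|X_n|^p)=\E(|\xi_1|^p)$ by induction, bound $\E(|\E(X_n\mid\F_{n-1})|^p)$ via conditional Jensen, and combine the two with the Minkowski (triangle) inequality in $L^p$. The only cosmetic difference is that the paper proves $\E(|X_n|^p)=\E(|\xi_1|^p)$ directly by the law of total expectation rather than via the random-index representation $X_n=\xi_{J_n}$.
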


\begin{proof}[Proof of Lemma~\ref{lemma:Lpbound}]
We first use induction to see that $\E(|X_n|^p)=\E(|\xi_1|^p)$.
The statement is clear for $n=1$ and for $n=2,3,\dots$, one can write by the law of total expectation that
\begin{equation}
\E(|X_n|^p)=\sum_{k=1}^{n-1}\E\left(|X_n|^p \big| X_n=X_k\right)\P(X_n=X_k)+\E\left(|X_n|^p\big|X_n=\xi_n\right)\P(X_n=\xi_n)
\end{equation}
which is equal to $\E(|\xi_1|^p)$ by the induction hypothesis.

On the other hand, Jensen's inequality implies that $|\E(X_n|\F_{n-1})|^p\le\E(|X_n|^p|\F_{n-1})$,
which after taking expectation yields that $\E(|\E(X_n|\F_{n-1})|^p)\le\E(|\xi_1|^p)$.
Then by applying the Minkowski inequality for $\varepsilon_n=X_n-\E(X_n|\F_{n-1})$ from \eqref{defeps}, we have that
\begin{equation}
\E(|\varepsilon_n|^p)\le\left(\left(\E\left(|X_n|^p\right)\right)^{1/p}+\left(\E\big(|\E\left(X_n|\F_{n-1}\right)|^p\big)\right)^{1/p}\right)^p
\le2^p\E(|\xi_1|^p)
\end{equation}
which proves \eqref{Lpbound}.
\end{proof}

\begin{proof}[Proof of Theorem~\ref{thm:conv}]
\begin{enumerate}
\item
It is clear from the representation \eqref{Qnsumrepr} and from Lemma~\ref{lemma:Lpbound}
that the expectation of the predictable quadratic variation process can be bounded as
\begin{equation}
\E(\langle Q\rangle_n)\le4m_2\sum_{k=1}^na_k^2
\end{equation}
which remains finite in $n$ exactly in the superdiffusive regime $\alpha\in(1/2,1]$.
As a consequence, the increasing limit $\lim_{n\to\infty}\langle Q\rangle_n$ is an almost surely finite random variable
and the martingale $(Q_n)$ converges almost surely to its limit $Q=\sum_{k=1}^\infty a_k\varepsilon_k$.

\item
The conditional expectation of the $p$th power of $Q_{n+1}$ using $Q_{n+1}=Q_n+a_{n+1}\varepsilon_{n+1}$ from \eqref{Qnsumrepr} can be written as
\begin{equation}\label{Qn+1condexp}
\E(Q_{n+1}^p|\F_n)=\sum_{k=0}^p\binom pka_{n+1}^k\E(\varepsilon_{n+1}^k|\F_n)Q_n^{p-k}.
\end{equation}
Note that the $k=1$ term above vanishes since $\E(\varepsilon_{n+1}|\F_n)=0$.
The absolute value of the expectation of the random variable which appears in the $k=2,\dots,p$ terms on the right-hand side of \eqref{Qn+1condexp}
can be upper bounded as
\begin{equation}\label{QnHolder}\begin{aligned}
\left|\E\left(\E(\varepsilon_{n+1}^k|\F_n)Q_n^{p-k}\right)\right|&\le\E\left(\E(|\varepsilon_{n+1}|^k|\F_n)|Q_n|^{p-k}\right)\\
&\le\left(\E\left(\big(\E(|\varepsilon_{n+1}|^k|\F_n)\big)^{p/k}\right)\right)^{k/p}\big(\E\left(|Q_n|^p\right)\big)^{(p-k)/p}\\
&\le\left(\E\big(\E(|\varepsilon_{n+1}|^p|\F_n)\big)\right)^{k/p}\big(\E\left(|Q_n|^p\right)\big)^{(p-k)/p}\\
&=\big(\E(|\varepsilon_{n+1}|^p)\big)^{k/p}\big(\E\left(|Q_n|^p\right)\big)^{(p-k)/p}
\end{aligned}\end{equation}
where we used H\"older's inequality in the second inequality above and Jensen's inequality for conditional expectations in the last one.
By taking expectation in \eqref{Qn+1condexp} we get that
\begin{equation}\label{EQnp}\begin{aligned}
\E(Q_{n+1}^p)&\le\E(Q_n^p)+a_{n+1}^2\sum_{k=2}^p\binom pk\big(\E(|\varepsilon_{n+1}|^p)\big)^{k/p}\big(\E(Q_n^p)\big)^{(p-k)/p}\\
&\le\E(Q_n^p)+a_{n+1}^2\sum_{k=2}^p\binom pk(1+\E(|\varepsilon_{n+1}|^p))(1+\E(Q_n^p))\\
&\le\left(1+a_{n+1}^22^p(1+\E(|\varepsilon_{n+1}|^p))\right)\E(Q_n^p)+a_{n+1}^22^p(1+\E(|\varepsilon_{n+1}|^p))
\end{aligned}\end{equation}
where we used \eqref{QnHolder} and the fact that $a_{n+1}\in(0,1]$ in the first inequality above
and the upper bounds $(\E(|\varepsilon_{n+1}|^p))^{k/p}\le1+\E(|\varepsilon_{n+1}|^p)$ and $(\E(Q_n^p))^{(p-k)/p}\le1+\E(Q_n^p)$ in the second one.
Note also that since $p$ is even, we have $\E(Q_n^p)=\E(|Q_n|^p)$.
By Lemma~\ref{lemma:Lpbound}, we have $\E(|\varepsilon_{n+1}|^p)\le2^p\E(|\xi_1|^p)$ where the upper bound does not depend on $n$.
By Lemma~\ref{lemma:recursion2} below with $\beta=2\alpha$ and $c=2^p(1+2^p\E(|\xi_1|^p))$, the expectations $\E(Q_n^p)$ remain bounded in $n$,
that is, the martingale $(Q_n)$ is bounded in $L^p$, hence it converges to its limit $Q$ also in $L^p$.
\end{enumerate}
\end{proof}

\begin{lemma}\label{lemma:recursion2}
Let $(b_n)$ be a sequence of positive real numbers which satisfies the recursive inequality
\begin{equation}
b_{n+1}\le\left(1+\frac c{n^{\beta}}\right)b_n+\frac c{n^{\beta}}
\end{equation}
for some $\beta>1$ and $c>0$ and suppose that $b_1\le c$.
Then the sequence $(b_n)$ remains bounded in $n$.
\end{lemma}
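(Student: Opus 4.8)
The plan is to linearize the affine recursion into a purely multiplicative one by a shift. Set $c_n=b_n+1$ for $n\ge1$; since $b_n>0$ this is well-defined and positive, and the hypothesis $b_1\le c$ gives $c_1\le c+1<\infty$. Adding $1$ to both sides of the recursive inequality and grouping terms, one gets
\begin{equation*}
c_{n+1}=b_{n+1}+1\le\left(1+\frac c{n^\beta}\right)b_n+\frac c{n^\beta}+1=\left(1+\frac c{n^\beta}\right)(b_n+1)=\left(1+\frac c{n^\beta}\right)c_n,
\end{equation*}
so the shifted sequence obeys $c_{n+1}\le(1+c/n^\beta)c_n$, which is the exact point of the ``$+1$ trick''.

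Next I would iterate this from $1$ to $n-1$ to obtain $c_n\le c_1\prod_{k=1}^{n-1}\bigl(1+c/k^\beta\bigr)$, and then bound the product uniformly in $n$. Using $\log(1+x)\le x$ for $x\ge0$ gives $\prod_{k=1}^{n-1}\bigl(1+c/k^\beta\bigr)\le\exp\!\bigl(c\sum_{k=1}^{n-1}k^{-\beta}\bigr)$, and since $\beta>1$ the series $\sum_{k\ge1}k^{-\beta}=\zeta(\beta)$ converges; an integral comparison even yields $\sum_{k\ge1}k^{-\beta}\le1+1/(\beta-1)=\beta/(\beta-1)$. Hence $c_n\le(c+1)\exp\!\bigl(c\beta/(\beta-1)\bigr)$ for all $n$, and therefore $b_n=c_n-1$ is bounded (with an explicit bound depending only on $c$ and $\beta$), which is the claim. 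This is exactly the form in which the lemma is invoked in the proof of Theorem~\ref{thm:conv}, with $\beta=2\alpha>1$ in the superdiffusive regime.

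I do not expect any genuine obstacle here: the argument is elementary, and the only step worth isolating is the substitution $c_n=b_n+1$ that turns the recursion multiplicative, after which the finiteness of the infinite product $\prod_{k\ge1}\bigl(1+c/k^\beta\bigr)$ for $\beta>1$—equivalently, the convergence of $\sum_{k\ge1}k^{-\beta}$—does everything. The hypothesis $b_1\le c$ is used only to make the constant explicit; any finite initial value $b_1$ would already force boundedness by the same reasoning.
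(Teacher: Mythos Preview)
Your proof is correct. The substitution $c_n=b_n+1$ is a clean ``$+1$ trick'' that turns the affine inequality into a purely multiplicative one, and the rest is the convergence of $\prod_{k\ge1}(1+c/k^\beta)$ for $\beta>1$.

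The paper's argument is a slight variant: instead of shifting, it iterates the affine recursion directly to obtain
\[
b_n\le\prod_{k=1}^{n-1}\Bigl(1+\frac{c}{k^\beta}\Bigr)\Bigl(c+\sum_{k=1}^{n-1}\frac{c}{k^\beta}\Bigr),
\]
which is bounded for the same reason. Both approaches are elementary and hinge on the same convergence fact; your substitution saves the extra additive sum in the bound and makes the constant slightly cleaner, while the paper's version is the usual explicit solution of a linear recursion with a forcing term. You are also right that the hypothesis $b_1\le c$ is only used to make the constant explicit---any finite $b_1$ suffices for boundedness.
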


\begin{proof}[Proof of Lemma~\ref{lemma:recursion2}]
It can be seen by induction on $n$ that
\begin{equation}\label{bnupperbound}
b_n\le\prod_{k=1}^{n-1}\left(1+\frac c{k^\beta}\right)\left(c+\sum_{k=1}^{n-1}\frac c{k^\beta}\right)
\end{equation}
holds for all $n=1,2,\dots$.
The upper bound on the right-hand side of \eqref{bnupperbound} is increasing in $n$ and its $n\to\infty$ limit is finite since $\beta>1$.
\end{proof}

\section{Limiting moments}
\label{s:moments}

We give the proof of Theorem~\ref{thm:moments} in this section.
For this we introduce
\begin{equation}
\wt T_n=\sum_{k=1}^n X_k^2-nm_2,\qquad\wt U_n=\sum_{k=1}^n X_k^3-nm_3.
\end{equation}
We define the mixed moments
\begin{align}
M_{1,2}&=\E((\xi_1-m_1)(\xi_1^2-m_2))=m_3-m_1m_2,\label{defM12}\\
M_{1,3}&=\E((\xi_1-m_1)(\xi_1^3-m_3))=m_4-m_1m_3,\label{defM13}\\
M_{2,2}&=\E((\xi_1^2-m_2)^2)=m_4-m_2^2,\label{defM22}\\
M_{1,1,2}&=\E((\xi_1-m_1)^2(\xi_1^2-m_2))=m_4-m_2^2-2m_1m_3+2m_1^2m_2.\label{defM112}
\end{align}
Note that the moments $M_k$ given in \eqref{defmkMk} can be expressed in terms of the moments $m_k$ as
\begin{align}
M_2&=m_2-m_1^2,\label{defM2}\\
M_3&=m_3-3m_1m_2+2m_1^3,\label{defM3}\\
M_4&=m_4-4m_1m_3+6m_1^2m_2-3m_1^4.\label{defM4}
\end{align}
The idea to compute the moments of the limit $Q$ in Theorem~\ref{thm:moments} is to use the convergence in $L^p$ from Theorem~\ref{thm:conv} with $p=4$
and to write down and solve recursions for the mixed moments of the elephant random walk,
see Propositions~\ref{prop:recursions} and \ref{prop:recsolutions} below.

\begin{prop}\label{prop:recursions}
The mixed moments of $\wt S_n$, $\wt T_n$ and $\wt U_n$ satisfy the following recursions:
\begin{align}
\E(\wt S_{n+1}^2)&=\left(1+\frac{2\alpha}n\right)\E(\wt S_n^2)+M_2,\label{recursion2}\\
\E(\wt S_{n+1}\wt T_{n+1})&=\left(1+\frac{2\alpha}n\right)\E(\wt S_n\wt T_n)+M_{1,2},\label{recursion12}\\
\E(\wt S_{n+1}^3)&=\left(1+\frac{3\alpha}n\right)\E(\wt S_n^3)+\frac{3\alpha}n\E(\wt S_n\wt T_n)-\frac{6\alpha}nm_1\E(\wt S_n^2)+M_3,\label{recursion3}\\
\E(\wt S_{n+1}\wt U_{n+1})&=\left(1+\frac{2\alpha}n\right)\E(\wt S_n\wt U_n)+M_{1,3},\label{recursion13}\\
\E(\wt T_{n+1}^2)&=\left(1+\frac{2\alpha}n\right)\E(\wt T_n^2)+M_{2,2},\label{recursion22}\\
\E(\wt S_{n+1}^2\wt T_{n+1})
&=\left(1+\frac{3\alpha}n\right)\E(\wt S_n^2\wt T_n)+\frac{2\alpha}n\E(\wt S_n\wt U_n)+\frac\alpha n\E(\wt T_n^2)\label{recursion112}\\
&\qquad-\frac{4\alpha}nm_1\E(\wt S_n\wt T_n)-\frac{2\alpha}nm_2\E(\wt S_n^2)+M_{1,1,2},\notag\\
\E(\wt S_{n+1}^4)&=\left(1+\frac{4\alpha}n\right)\E(\wt S_n^4)+\frac{6\alpha}n\E(\wt S_n^2\wt T_n)+\frac{4\alpha}n\E(\wt S_n\wt U_n)\label{recursion4}\\
&\qquad-\frac{12\alpha}nm_1\left(\E(\wt S_n^3)+\E(\wt S_n\wt T_n)\right)+\left(\frac{12\alpha}nm_1^2+6M_2\right)\E(\wt S_n^2)+M_4\notag
\end{align}
for $n=1,2,\dots$ where the initial values are given by
\begin{equation}\begin{aligned}
\E(\wt S_1^2)&=M_2,&\E(\wt S_1\wt T_1)&=M_{1,2},&\E(\wt S_1^3)&=M_3,&\E(\wt S_1\wt U_1)&=M_{1,3},\\
\E(\wt T_1^2)&=M_{2,2},&\E(\wt S_1^2\wt T_1)&=M_{1,1,2},&\E(\wt S_1^4)&=M_4.
\end{aligned}\end{equation}
\end{prop}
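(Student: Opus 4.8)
To establish Proposition~\ref{prop:recursions} the plan is to deduce all seven recursions from a single basic fact about the conditional law of the new step. By \eqref{defXn}, for any $g$ with $\E|g(\xi_1)|<\infty$,
\begin{equation*}
\E(g(X_{n+1})\,|\,\F_n)=\frac\alpha n\sum_{k=1}^n g(X_k)+(1-\alpha)\E(g(\xi_1));
\end{equation*}
taking $g(x)=x,x^2,x^3$ and using $\sum_{k=1}^nX_k=\wt S_n+nm_1$, $\sum_{k=1}^nX_k^2=\wt T_n+nm_2$, $\sum_{k=1}^nX_k^3=\wt U_n+nm_3$ yields
\begin{equation*}
\E(X_{n+1}\,|\,\F_n)=m_1+\tfrac\alpha n\wt S_n,\qquad\E(X_{n+1}^2\,|\,\F_n)=m_2+\tfrac\alpha n\wt T_n,\qquad\E(X_{n+1}^3\,|\,\F_n)=m_3+\tfrac\alpha n\wt U_n,
\end{equation*}
while $\E(X_{n+1}^4\,|\,\F_n)=(1-\alpha)m_4+\frac\alpha n\sum_{k=1}^nX_k^4$, for which I would deliberately \emph{not} introduce a separate centered process (see below). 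Extending the induction from the proof of Lemma~\ref{lemma:Lpbound} to non-absolute powers also gives $\E(X_n^j)=m_j$ for $1\le j\le4$, hence $\E(\wt S_n)=\E(\wt T_n)=\E(\wt U_n)=0$ for every $n$ (this last fact equally follows by iterating the identities just displayed).

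For each of the seven mixed moments I would argue in the same way. Write $\wt S_{n+1}=\wt S_n+(X_{n+1}-m_1)$ and analogously for $\wt T$ and $\wt U$; expand the relevant product by the binomial theorem (for instance $\wt S_{n+1}^2\wt T_{n+1}=(\wt S_n^2+2\wt S_n(X_{n+1}-m_1)+(X_{n+1}-m_1)^2)(\wt T_n+X_{n+1}^2-m_2)$); take $\E(\,\cdot\,|\,\F_n)$, factoring out the $\F_n$-measurable quantities; and thereby reduce everything to computing $\E\big((X_{n+1}-m_1)^a(X_{n+1}^2-m_2)^b(X_{n+1}^3-m_3)^c\,|\,\F_n\big)$ for the finitely many triples $(a,b,c)$ that occur. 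After expanding in powers of $X_{n+1}$ and substituting the displayed conditional expectations, each such quantity becomes an explicit affine combination of $\wt S_n$, $\wt T_n$, $\wt U_n$ (and possibly $\sum_{k=1}^nX_k^4$) plus a deterministic constant. Finally take full expectations: every term linear in $\wt S_n$, $\wt T_n$, $\wt U_n$ vanishes, the constants reorganize into $M_2$, $M_{1,2}$, $M_3$, $M_{1,3}$, $M_{2,2}$, $M_{1,1,2}$, $M_4$ by way of \eqref{defM12}--\eqref{defM4}, and the stated recursion can be read off. The initial values at $n=1$ are immediate from $\wt S_1=\xi_1-m_1$, $\wt T_1=\xi_1^2-m_2$, $\wt U_1=\xi_1^3-m_3$ together with the definitions \eqref{defmkMk} and \eqref{defM12}--\eqref{defM112}.

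The one point needing care is the power $X_{n+1}^4$, which enters \eqref{recursion13}, \eqref{recursion22}, \eqref{recursion112} and \eqref{recursion4} only through the prefactor-free increments $(X_{n+1}-m_1)(X_{n+1}^3-m_3)$, $(X_{n+1}^2-m_2)^2$, $(X_{n+1}-m_1)^2(X_{n+1}^2-m_2)$ and $(X_{n+1}-m_1)^4$: since no $\F_n$-measurable factor multiplies it, the term $\frac\alpha n\sum_{k=1}^nX_k^4$ contributes only $\frac\alpha n\cdot nm_4=\alpha m_4$ upon taking expectation, which combines with $(1-\alpha)m_4$ to give $m_4$; this is why $\sum_{k=1}^nX_k^4$ need not be tracked as a separate process. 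The genuine work is purely computational: organizing the many terms in \eqref{recursion112} and especially \eqref{recursion4}, and checking that the coefficients emerge exactly as stated --- for example the combined term $-\frac{12\alpha}nm_1\big(\E(\wt S_n^3)+\E(\wt S_n\wt T_n)\big)$ and the $1/n$-free contribution $6M_2\E(\wt S_n^2)$ in \eqref{recursion4}. I expect this coefficient matching in the fourth-moment recursion to be the main --- though entirely routine --- obstacle.
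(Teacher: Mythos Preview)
Your proposal is correct and follows essentially the same route as the paper: expand via $\wt S_{n+1}=\wt S_n+(X_{n+1}-m_1)$ (and the analogous identities for $\wt T,\wt U$), condition on $\F_n$, use the formulas $\E(X_{n+1}^k\mid\F_n)=m_k+\frac\alpha n(\wt S_n,\wt T_n,\wt U_n)$ for $k=1,2,3$, take expectations, and exploit $\E(\wt S_n)=\E(\wt T_n)=\E(\wt U_n)=0$. Your explicit remark that $X_{n+1}^4$ occurs only in the ``pure'' increment term (so that its conditional expectation collapses to $m_4$ after taking full expectation and no fourth-power centered process is needed) is exactly the observation the paper records in \eqref{expectations:mixed}.
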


\begin{prop}\label{prop:recsolutions}
The solutions of the recursions written in Proposition~\ref{prop:recursions} are given by
\begin{align}
\E(\wt S_n^2)&=\frac{M_2}{(2\alpha-1)\Gamma(2\alpha)}\frac{\Gamma(n+2\alpha)}{\Gamma(n)}-\frac{M_2}{2\alpha-1}n,\label{recsolution2}\\
\E(\wt S_n\wt T_n)&=\frac{M_{1,2}}{(2\alpha-1)\Gamma(2\alpha)}\frac{\Gamma(n+2\alpha)}{\Gamma(n)}-\frac{M_{1,2}}{2\alpha-1}n,\label{recsolution12}\\
\E(\wt S_n^3)&=\frac{4M_3}{(3\alpha-1)\Gamma(3\alpha)}\frac{\Gamma(n+3\alpha)}{\Gamma(n)}-\frac{3M_3}{(2\alpha-1)\Gamma(2\alpha)}\frac{\Gamma(n+2\alpha)}{\Gamma(n)}
+\frac{(\alpha+1)M_3}{(2\alpha-1)(3\alpha-1)}n,\label{recsolution3}\\
\E(\wt S_n\wt U_n)&=\frac{M_{1,3}}{(2\alpha-1)\Gamma(2\alpha)}\frac{\Gamma(n+2\alpha)}{\Gamma(n)}-\frac{M_{1,3}}{2\alpha-1}n,\label{recsolution13}\\
\E(\wt T_n^2)&=\frac{M_{2,2}}{(2\alpha-1)\Gamma(2\alpha)}\frac{\Gamma(n+2\alpha)}{\Gamma(n)}-\frac{M_{2,2}}{2\alpha-1}n,\label{recsolution22}\\
\E(\wt S_n^2\wt T_n)&=\frac{4M_{1,1,2}}{(3\alpha-1)\Gamma(3\alpha)}\frac{\Gamma(n+3\alpha)}{\Gamma(n)}-\frac{3M_{1,1,2}}{(2\alpha-1)\Gamma(2\alpha)}\frac{\Gamma(n+2\alpha)}{\Gamma(n)}
+\frac{(\alpha+1)M_{1,1,2}}{(2\alpha-1)(3\alpha-1)}n.\label{recsolution112}
\end{align}
Finally the asymptotic equivalence
\begin{equation}\label{recsolution4}
\E(\wt S_n^4)\sim\frac{6(3(2\alpha-1)^2M_4+2(1-\alpha)(5\alpha-2)M_2^2)}{(2\alpha-1)^2(4\alpha-1)\Gamma(4\alpha)}\frac{\Gamma(n+4\alpha)}{\Gamma(n)}
\end{equation}
holds as $n\to\infty$.
\end{prop}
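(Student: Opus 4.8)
The plan is to solve the recursions of Proposition~\ref{prop:recursions} one at a time. Each of them is a first-order inhomogeneous linear recursion of the form
\[
x_{n+1}=\left(1+\frac{\gamma\alpha}{n}\right)x_n+g_n,\qquad\gamma\in\{2,3,4\},
\]
and the homogeneous equation $y_{n+1}=(1+\beta/n)y_n$ is solved by $y_n=\mathrm{const}\cdot\Gamma(n+\beta)/\Gamma(n)$, since $\Gamma(n+1+\beta)/\Gamma(n+1)=\frac{n+\beta}{n}\,\Gamma(n+\beta)/\Gamma(n)$. For each recursion I would exhibit a particular solution matched to the shape of the forcing term $g_n$, add a multiple of $\Gamma(n+\gamma\alpha)/\Gamma(n)$, and fix that multiple from the initial value listed in Proposition~\ref{prop:recursions}; since a first-order recursion with prescribed initial value has a unique solution, the resulting identity is then proved (one may equally verify it directly by induction). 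The recursions must be treated in the order set by their interdependence: first the ``$\gamma=2$'' group \eqref{recursion2}, \eqref{recursion12}, \eqref{recursion13}, \eqref{recursion22}; then the ``$\gamma=3$'' recursions \eqref{recursion3} and \eqref{recursion112}; and finally \eqref{recursion4}.

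In \eqref{recursion2}, \eqref{recursion12}, \eqref{recursion13}, \eqref{recursion22} the forcing term is the constant $M_2$ (respectively $M_{1,2}$, $M_{1,3}$, $M_{2,2}$); a particular solution is $-\frac{M_2}{2\alpha-1}\,n$, and matching $\E(\wt S_1^2)=M_2$ together with $\Gamma(1+2\alpha)=2\alpha\,\Gamma(2\alpha)$ forces the coefficient of $\Gamma(n+2\alpha)/\Gamma(n)$ to be $\frac{M_2}{(2\alpha-1)\Gamma(2\alpha)}$, which gives \eqref{recsolution2} and, identically, \eqref{recsolution12}, \eqref{recsolution13}, \eqref{recsolution22}. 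Substituting \eqref{recsolution2} and \eqref{recsolution12} into the right-hand side of \eqref{recursion3}, the combination $\frac{3\alpha}{n}\E(\wt S_n\wt T_n)-\frac{6\alpha}{n}m_1\E(\wt S_n^2)+M_3$ collapses because of the elementary identity $M_{1,2}-2m_1M_2=M_3$ (immediate from \eqref{defM12}, \eqref{defM2}, \eqref{defM3}); the forcing term becomes $\frac{3\alpha M_3}{(2\alpha-1)\Gamma(2\alpha)}\cdot\frac{1}{n}\frac{\Gamma(n+2\alpha)}{\Gamma(n)}-\frac{(\alpha+1)M_3}{2\alpha-1}$. A multiple $B\cdot\Gamma(n+2\alpha)/\Gamma(n)$ absorbs the first term once $-\alpha B=\frac{3\alpha M_3}{(2\alpha-1)\Gamma(2\alpha)}$, and $\frac{(\alpha+1)M_3}{(2\alpha-1)(3\alpha-1)}\,n$ absorbs the constant; adding $\frac{4M_3}{(3\alpha-1)\Gamma(3\alpha)}\,\Gamma(n+3\alpha)/\Gamma(n)$ and matching $\E(\wt S_1^3)=M_3$ (a one-line computation resting on $24\alpha^2-12\alpha=12\alpha(2\alpha-1)$) yields \eqref{recsolution3}. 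Recursion \eqref{recursion112} is handled identically once the analogous identity $2M_{1,3}+M_{2,2}-4m_1M_{1,2}-2m_2M_2=3M_{1,1,2}$ is noted: it then becomes the exact $M_3\mapsto M_{1,1,2}$ analogue of \eqref{recursion3} with initial value $M_{1,1,2}$, and \eqref{recsolution112} follows from \eqref{recsolution3}.

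For the last assertion I would feed all of \eqref{recsolution2}--\eqref{recsolution112} into \eqref{recursion4}. Using a few further elementary identities among the moments in \eqref{defM12}--\eqref{defM4} --- notably $M_{1,1,2}-2m_1M_3=M_4-M_2^2$ and $M_{1,3}-3m_1M_{1,2}+3m_1^2M_2=M_4$ --- all of the $m_1$- and $m_2$-dependence cancels and the forcing term collapses to a fixed linear combination of $\frac{1}{n}\frac{\Gamma(n+3\alpha)}{\Gamma(n)}$, $\frac{1}{n}\frac{\Gamma(n+2\alpha)}{\Gamma(n)}$, $\frac{\Gamma(n+2\alpha)}{\Gamma(n)}$, $n$ and $1$, with coefficients built only from $M_4$ and $M_2^2$. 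Each of these shapes has an explicit particular solution: a multiple of $\Gamma(n+3\alpha)/\Gamma(n)$, a multiple of $\Gamma(n+2\alpha)/\Gamma(n)$, a multiple of $(n+1)\,\Gamma(n+2\alpha)/\Gamma(n)$ (here the ansatz must be raised by one degree, since a plain multiple of $\Gamma(n+2\alpha)/\Gamma(n)$ cannot match a forcing term of exactly that form when $2\alpha\neq4\alpha$), a quadratic $En^2+Fn$ for the forcing $\propto n$, and a multiple of $n$ for the constant, respectively. The decisive point is that, because $\alpha\in(1/2,1]$, every one of these particular solutions is $o\!\left(\Gamma(n+4\alpha)/\Gamma(n)\right)$: they are of orders $n^{3\alpha}$, $n^{2\alpha}$, $n^{2\alpha+1}$, $n^2$ and $n$, and each exponent is strictly below $4\alpha$ exactly because $\alpha>1/2$. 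Hence, writing $P_n$ for the sum of these particular solutions, $\E(\wt S_n^4)=C_4\,\Gamma(n+4\alpha)/\Gamma(n)+P_n$ with $C_4=(M_4-P_1)/\Gamma(1+4\alpha)$ fixed by $\E(\wt S_1^4)=M_4$, so $\E(\wt S_n^4)\sim C_4\,\Gamma(n+4\alpha)/\Gamma(n)$.

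The one step I expect to cost real work is this final simplification: computing $P_1$ and checking that $M_4-P_1=\frac{24\alpha\bigl(3(2\alpha-1)^2M_4+2(1-\alpha)(5\alpha-2)M_2^2\bigr)}{(2\alpha-1)^2(4\alpha-1)}$, so that $\Gamma(1+4\alpha)=4\alpha\,\Gamma(4\alpha)$ delivers the constant in \eqref{recsolution4}. This is where the bookkeeping is heaviest, since it is precisely the place where the $m_1$- and $m_2$-dependence coming from \eqref{recursion4} and from the non-invariant auxiliary moments $M_{1,3}$, $M_{2,2}$, $M_{1,1,2}$ must conspire to cancel. That this cancellation must happen can be seen beforehand: the law of $\wt S_n$ is unchanged under the shift $\xi_i\mapsto\xi_i+c$ (under which $S_n\mapsto S_n+cn$), so $\E(\wt S_n^4)$ depends only on centered moments, and the only degree-four invariants available are $M_4$ and $M_2^2$. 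In practice one may therefore simply take $m_1=0$ from the outset of the $\gamma=4$ computation, which lightens the algebra considerably.
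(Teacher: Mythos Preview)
Your plan is correct and follows the same overall architecture as the paper's proof: solve the recursions in the order dictated by their coupling, use the moment identities $M_{1,2}-2m_1M_2=M_3$, $2M_{1,3}+M_{2,2}-4m_1M_{1,2}-2m_2M_2=3M_{1,1,2}$, $M_{1,1,2}-2m_1M_3=M_4-M_2^2$ and $M_{1,3}-3m_1M_{1,2}+3m_1^2M_2=M_4$ to collapse the forcing terms, observe that \eqref{recursion112} becomes the $M_3\mapsto M_{1,1,2}$ copy of \eqref{recursion3}, and extract only the leading constant for \eqref{recsolution4}.

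The one genuine difference is the mechanism you use to solve each linear recursion. The paper writes down the variation-of-constants formula $b_n=\frac{\Gamma(n+\beta)}{\Gamma(n)}\bigl(\frac{b_1}{\Gamma(1+\beta)}+\sum_{j=1}^{n-1}\frac{\Gamma(j+1)}{\Gamma(j+1+\beta)}c_j\bigr)$ and then evaluates the resulting sums via two closed-form gamma-ratio identities (their Lemma~\ref{lemma:gammasum}); the asymptotic \eqref{recsolution4} is obtained by keeping only the constant terms of those sums as $n\to\infty$. You instead use undetermined coefficients, matching each shape of forcing term to an explicit particular solution and then arguing that in the $\gamma=4$ case every particular piece is $o(n^{4\alpha})$ because $\alpha>1/2$. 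Your route avoids Lemma~\ref{lemma:gammasum} entirely and makes the role of the superdiffusivity hypothesis more transparent (it is exactly what forces $2\alpha+1<4\alpha$ and $2<4\alpha$); the paper's route is more mechanical and does not require guessing ans\"atze. Your closing remark that shift-invariance of $\wt S_n$ lets one set $m_1=0$ before the $\gamma=4$ computation is a convenience the paper does not exploit; it genuinely shortens the final bookkeeping, though the paper's computation shows the cancellation can also be done by hand.
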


\begin{proof}[Proof of Theorem~\ref{thm:moments}]
By the assumption $m_4<\infty$, Theorem~\ref{thm:conv} implies the convergence \eqref{QLp} in $L^p$ for $p=4$.
As a consequence
\begin{equation}\label{momentconv}
n^{-p\alpha}\E(\wt S_n^p)\to\E(Q^p)
\end{equation}
as $n\to\infty$ for $p=1,\dots,4$.
Note that $\E(X_n)=m_1$ for all $n$ hence $\E(\wt S_n)=0$ which implies \eqref{Q1}.
The values of the moments in \eqref{Q2}--\eqref{Q4} are obtained by taking the leading term
in formulas \eqref{recsolution2}, \eqref{recsolution3} and \eqref{recsolution4} using \eqref{momentconv}
and the asymptotic equality $\Gamma(n+p\alpha)/\Gamma(n)\sim n^{p\alpha}$.
\end{proof}

\begin{proof}[Proof of Proposition~\ref{prop:recursions}]
We start by writing
\begin{equation}\label{recursionrewrite}
\wt S_{n+1}=\wt S_n+X_{n+1}-m_1,\quad\wt T_{n+1}=\wt T_n+X_{n+1}^2-m_2,\quad\wt U_{n+1}=\wt U_n+X_{n+1}^3-m_3.
\end{equation}
We use these formulas on the left-hand side of the recursions \eqref{recursion2}--\eqref{recursion4} and we expand the products under the expectation.
Then we get the sum of several expectations involving products with combinations of $\wt S_n$, $\wt T_n$, $\wt U_n$ multiplied by powers of $X_{n+1}$.
The expectation of such a product is computed by taking the conditional expectation of the factor involving $X_{n+1}$ with respect to $\F_n$ first
and then by taking expectation, e.g.
\begin{equation}\label{examplerec}
\E(\wt S_{n+1}^4)=\sum_{k=0}^4\binom4k\E(\wt S_n^k(X_{n+1}-m_1)^{4-k})=\sum_{k=0}^4\binom4k\E(\wt S_n^k\E((X_{n+1}-m_1)^{4-k}|\F_n)).
\end{equation}
There are two types of terms in the resulting expressions:
mixed terms including powers of $X_{n+1}$ multiplied by an expression of $\wt S_n$, $\wt T_n$ or $\wt U_n$ under the expectation
($k=1,2,3,4$ terms in \eqref{examplerec})
and pure terms being the expectation of a polynomial of $X_{n+1}$ only ($k=0$ term in \eqref{examplerec}).
In order to compute the expectation appearing in mixed terms, we use the conditional expectation of powers of $X_{n+1}$ given in \eqref{condexp1}--\eqref{condexp3} below.
For the pure terms, the computation of the conditional expectation of the appropriate polynomial is not needed,
the expectations given in \eqref{expectations:mixed} are enough to get the recursions \eqref{recursion2}--\eqref{recursion4} for the expectations.

Next we prove
\begin{align}
\E(X_{n+1}-m_1|\F_n)&=\frac\alpha n\wt S_n,\label{condexp1}\\
\E((X_{n+1}-m_1)^2|\F_n)&=\frac\alpha n\wt T_n-\frac{2\alpha}nm_1\wt S_n+M_2,\label{condexp11}\\
\E((X_{n+1}-m_1)^3|\F_n)&=\frac\alpha n\wt U_n-\frac{3\alpha}nm_1\wt T_n+\frac{3\alpha}nm_1^2\wt S_n+M_3,\label{condexp111}\\
\E(X_{n+1}^2-m_2|\F_n)&=\frac\alpha n\wt T_n,\label{condexp2}\\
\E((X_{n+1}^2-m_2)(X_{n+1}-m_1)|\F_n)&=\frac\alpha n\wt U_n-\frac\alpha nm_1\wt T_n-\frac\alpha nm_2\wt S_n+M_{1,2},\label{condexp12}\\
\E(X_{n+1}^3-m_3|\F_n)&=\frac\alpha n\wt U_n.\label{condexp3}
\end{align}
To see \eqref{condexp1}, \eqref{condexp2} and \eqref{condexp3} it is enough to observe that for any integer $k$,
$X_{n+1}^k$ is equal to $X_i^k$ for some $i=1,\dots,n$ with probability $\alpha/n$ each
and it is equal to a new sample $\xi_{n+1}^k$ with probability $1-\alpha$.
Then the conditional expectation is equal to $\frac\alpha n\sum_{i=1}^nX_i^k-\alpha m_k$.

For \eqref{condexp11}, we write
\begin{equation}
(X_{n+1}-m_1)^2=(X_{n+1}^2-m_2)-2m_1(X_{n+1}-m_1)+m_2-m_1^2
\end{equation}
and we apply \eqref{condexp2} and \eqref{condexp1}.
We get \eqref{condexp111} by writing
\begin{equation}
(X_{n+1}-m_1)^3=(X_{n+1}^3-m_3)-3m_1(X_{n+1}^2-m_2)+3m_1^2(X_{n+1}-m_1)+M_3.
\end{equation}
We use
\begin{equation}
(X_{n+1}^2-m_2)(X_{n+1}-m_1)=(X_{n+1}^3-m_3)-m_1(X_{n+1}^2-m_2)-m_2(X_{n+1}-m_1)+M_{1,2}
\end{equation}
for the proof of \eqref{condexp12}.

Further using the definitions \eqref{defM4}, \eqref{defM22}, \eqref{defM13} and \eqref{defM112}, we can see by induction on $n$ the equality of expectations
\begin{equation}\label{expectations:mixed}\begin{aligned}
\E((X_{n+1}-m_1)^4)&=M_4,&\E((X_{n+1}^2-m_2)^2)&=M_{2,2},\\
\E((X_{n+1}^3-m_3)(X_{n+1}-m_1))&=M_{1,3},&\E((X_{n+1}^2-m_2)(X_{n+1}-m_1)^2)&=M_{1,1,2}.
\end{aligned}\end{equation}
For the expectation of the recentered sums
\begin{equation}\label{expectations:tildesums}
\E(\wt S_n)=\E(\wt T_n)=\E(\wt U_n)=0
\end{equation}
holds.

Then we are ready to verify the recursions \eqref{recursion2}--\eqref{recursion4}.
We rewrite the $(n+1)$st terms on the left-hand side by \eqref{recursionrewrite} in each of the recursions.
Then we use \eqref{condexp1}--\eqref{condexp11} and \eqref{expectations:tildesums} to show \eqref{recursion2},
recursion \eqref{recursion12} follow by \eqref{condexp1}, \eqref{condexp2}, \eqref{condexp12} and \eqref{expectations:tildesums}.
By \eqref{condexp1}--\eqref{condexp111} and \eqref{expectations:tildesums} we get \eqref{recursion3}
and by \eqref{condexp1}, \eqref{condexp3}, \eqref{expectations:mixed}--\eqref{expectations:tildesums} we have \eqref{recursion13}.
Recursion \eqref{recursion22} follows from \eqref{condexp2} and \eqref{expectations:mixed}.
For \eqref{recursion112}, the equalities \eqref{condexp1}--\eqref{condexp11}, \eqref{condexp2}--\eqref{condexp12},
\eqref{expectations:mixed}--\eqref{expectations:tildesums} are used.
Finally formulas \eqref{condexp1}--\eqref{condexp111}, \eqref{expectations:mixed}--\eqref{expectations:tildesums} together yield \eqref{recursion4}.
\end{proof}

For the proof of Proposition~\ref{prop:recsolutions} about the solutions of recursions in Proposition~\ref{prop:recursions} one uses the following two lemmas.
The first one provides the general solution of recursions which the moments of the elephant random walk satisfy,
the second one contains two useful identities about sums of gamma ratios.

\begin{lemma}\label{lemma:recursions}
Assume that the real sequence $(b_n)$ satisfies the recursion relation
\begin{equation}\label{bnrecursion}
b_{n+1}=\left(1+\frac\beta n\right)b_n+c_n
\end{equation}
for $n=1,2,\dots$ for some given $\beta>0$ and given sequence $c_n$ for $n=1,2,\dots$ and with a given initial value $b_1$.
Then
\begin{equation}\label{bnexplicit}
b_n=\frac{\Gamma(n+\beta)}{\Gamma(n)}\left(\frac{b_1}{\Gamma(1+\beta)}+\sum_{j=1}^{n-1}\frac{\Gamma(j+1)}{\Gamma(j+1+\beta)}c_j\right)
\end{equation}
holds for $n=1,2,\dots$.
In particular, if $c_n=c$ for all $n=1,2,\dots$ and also $b_1=c$, then
\begin{equation}\label{bnexplicitsimple}
b_n=\frac c{(\beta-1)\Gamma(\beta)}\frac{\Gamma(n+\beta)}{\Gamma(n)}-\frac c{\beta-1}n
\end{equation}
for all $n=1,2,\dots$.
\end{lemma}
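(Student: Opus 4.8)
The plan is to solve the linear recursion~\eqref{bnrecursion} by the discrete analogue of variation of constants. First I would record that the homogeneous recursion $h_{n+1}=(1+\beta/n)h_n$ is solved by $g_n:=\Gamma(n+\beta)/\Gamma(n)$, since $g_{n+1}/g_n=\Gamma(n+1+\beta)\Gamma(n)/(\Gamma(n+1)\Gamma(n+\beta))=(n+\beta)/n=1+\beta/n$. Writing $b_n=g_nd_n$ and substituting into~\eqref{bnrecursion} gives $g_{n+1}d_{n+1}=(1+\beta/n)g_nd_n+c_n=g_{n+1}d_n+c_n$, hence $d_{n+1}-d_n=c_n/g_{n+1}=\Gamma(n+1)c_n/\Gamma(n+1+\beta)$. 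Summing this from $1$ to $n-1$ and using $d_1=b_1/g_1=b_1/\Gamma(1+\beta)$ (because $g_1=\Gamma(1+\beta)/\Gamma(1)=\Gamma(1+\beta)$) yields $d_n=b_1/\Gamma(1+\beta)+\sum_{j=1}^{n-1}\Gamma(j+1)c_j/\Gamma(j+1+\beta)$, and multiplying back by $g_n$ gives exactly~\eqref{bnexplicit}. Alternatively, once~\eqref{bnexplicit} is written down it can be verified directly by induction on $n$: the case $n=1$ is immediate from the empty sum, and the inductive step amounts to the identity $g_{n+1}/g_n=1+\beta/n$ together with the fact that the $j=n$ term of the sum, multiplied by $g_{n+1}$, equals $c_n$.

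For the special case~\eqref{bnexplicitsimple} I would set $c_j=c$ and $b_1=c$ in~\eqref{bnexplicit} and evaluate the resulting sum $\sum_{j=1}^{n-1}\Gamma(j+1)/\Gamma(j+1+\beta)$ in closed form. This is a telescoping sum: from $\Gamma(j+2)=(j+1)\Gamma(j+1)$ and $\Gamma(j+1+\beta)=(j+\beta)\Gamma(j+\beta)$ one gets
\[
\frac{\Gamma(j+1)}{\Gamma(j+\beta)}-\frac{\Gamma(j+2)}{\Gamma(j+1+\beta)}=(\beta-1)\frac{\Gamma(j+1)}{\Gamma(j+1+\beta)},
\]
so (using $\beta>1$, which holds in all our applications with $\beta\in\{2\alpha,3\alpha,4\alpha\}$ and $\alpha>1/2$) the sum collapses to $\frac{1}{\beta-1}\bigl(\Gamma(2)/\Gamma(1+\beta)-\Gamma(n+1)/\Gamma(n+\beta)\bigr)$. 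Substituting this back into~\eqref{bnexplicit}, combining the two terms proportional to $1/\Gamma(1+\beta)$ via $\Gamma(1+\beta)=\beta\Gamma(\beta)$, and finally using $\Gamma(n+1)/\Gamma(n)=n$ after multiplication by $g_n=\Gamma(n+\beta)/\Gamma(n)$ produces~\eqref{bnexplicitsimple}.

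No step presents a real obstacle; the only things to watch are the bookkeeping of the gamma-function shifts (in particular the index range $j=1,\dots,n-1$ of the telescoping sum and the correct value of $d_1$), and the fact that~\eqref{bnexplicitsimple} tacitly requires $\beta\neq1$. The telescoping identity used here is, incidentally, precisely the kind of gamma-ratio identity collected in the second auxiliary lemma on which the proof of Proposition~\ref{prop:recsolutions} relies.
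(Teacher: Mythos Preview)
Your proof is correct and follows essentially the same route as the paper: the paper verifies~\eqref{bnexplicit} directly by checking the recursion and the base case (your ``alternative'' induction), whereas you first derive it via the discrete variation-of-constants substitution $b_n=g_nd_n$; for the special case both arguments reduce to the same gamma-ratio telescoping, which the paper packages as identity~\eqref{gammasum1} of Lemma~\ref{lemma:gammasum} and you prove inline. The only cosmetic remark is that your parenthetical ``using $\beta>1$'' is unnecessary---the telescoping and the division by $\beta-1$ are valid for any $\beta\neq1$, as you correctly note at the end.
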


\begin{lemma}\label{lemma:gammasum}
Let $a$ and $b$ be two arbitrary non-negative real numbers such that $b\neq a+1$.
Then for all $n=1,2,\dots$, the following identities hold
\begin{align}
\sum_{j=1}^n\frac{\Gamma(j+a)}{\Gamma(j+b)}&=\frac1{b-a-1}\left(\frac{\Gamma(a+1)}{\Gamma(b)}-\frac{\Gamma(n+a+1)}{\Gamma(n+b)}\right),\label{gammasum1}\\
\sum_{j=1}^n\frac{\Gamma(j+a)}{\Gamma(j+b)}j&=\frac1{(b-a-1)(b-a-2)}\left(\frac{\Gamma(a+1)}{\Gamma(b-1)}-\frac{\Gamma(n+a+1)}{\Gamma(n+b-1)}\right)\label{gammasum2}\\
&\qquad-\frac{n}{b-a-1}\frac{\Gamma(n+1+a)}{\Gamma(n+b)}.\notag
\end{align}
\end{lemma}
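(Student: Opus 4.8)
The plan is to prove both identities by exhibiting the summands as telescoping differences, using nothing beyond the functional equation $\Gamma(x+1)=x\Gamma(x)$. (Once the right-hand sides are written down one could equally well argue by a routine induction on $n$, but the telescoping argument is shorter and explains the shape of the formulas.)

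For \eqref{gammasum1} I would set
\[
d_j=\frac{1}{b-a-1}\cdot\frac{\Gamma(j+a)}{\Gamma(j+b-1)},\qquad j\ge1,
\]
which is well-defined since $b\neq a+1$, and verify that $\Gamma(j+a)/\Gamma(j+b)=d_j-d_{j+1}$. Indeed, using $\Gamma(j+b)=(j+b-1)\Gamma(j+b-1)$ and $\Gamma(j+1+a)=(j+a)\Gamma(j+a)$,
\[
d_j-d_{j+1}=\frac{1}{b-a-1}\cdot\frac{\Gamma(j+a)}{\Gamma(j+b)}\bigl((j+b-1)-(j+a)\bigr)=\frac{\Gamma(j+a)}{\Gamma(j+b)}.
\]
Summing over $j=1,\dots,n$ telescopes to $d_1-d_{n+1}$, and since $d_1=\frac{1}{b-a-1}\frac{\Gamma(a+1)}{\Gamma(b)}$ and $d_{n+1}=\frac{1}{b-a-1}\frac{\Gamma(n+a+1)}{\Gamma(n+b)}$ are already of the required form, \eqref{gammasum1} follows with no further manipulation.

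For \eqref{gammasum2} I would reduce to \eqref{gammasum1} via the elementary splitting $j\,\Gamma(j+a)=(j+a)\Gamma(j+a)-a\,\Gamma(j+a)=\Gamma(j+1+a)-a\,\Gamma(j+a)$, i.e.
\[
j\cdot\frac{\Gamma(j+a)}{\Gamma(j+b)}=\frac{\Gamma(j+1+a)}{\Gamma(j+b)}-a\cdot\frac{\Gamma(j+a)}{\Gamma(j+b)}.
\]
The second sum is \eqref{gammasum1}, and the first sum is \eqref{gammasum1} with $a$ replaced by $a+1$ (which forces the extra assumption $b\neq a+2$, needed anyway for the right-hand side of \eqref{gammasum2} to make sense), giving $\frac{1}{b-a-2}\bigl(\frac{\Gamma(a+2)}{\Gamma(b)}-\frac{\Gamma(n+a+2)}{\Gamma(n+b)}\bigr)$. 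Substituting both closed forms, using $\Gamma(a+2)=(a+1)\Gamma(a+1)$, $\Gamma(n+a+2)=(n+a+1)\Gamma(n+a+1)$, $\Gamma(b)=(b-1)\Gamma(b-1)$ and $\Gamma(n+b)=(n+b-1)\Gamma(n+b-1)$, and then collecting the boundary pair at $j=0$ over the common denominator $(b-a-1)(b-a-2)$, and likewise the boundary pair at $j=n$ after splitting off the term $-\frac{n}{b-a-1}\frac{\Gamma(n+1+a)}{\Gamma(n+b)}$, produces \eqref{gammasum2}.

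I expect the only real work to be this last collection. The computation hinges on the polynomial identity
\[
(a+1)(b-a-1)-a(b-a-2)=b-1,
\]
which, together with $\Gamma(b)=(b-1)\Gamma(b-1)$, collapses the $j=0$ boundary pair into $\frac{1}{(b-a-1)(b-a-2)}\frac{\Gamma(a+1)}{\Gamma(b-1)}$, and which, together with $\Gamma(n+b)=(n+b-1)\Gamma(n+b-1)$ and after the split noted above, collapses the $j=n$ boundary pair into $-\frac{1}{(b-a-1)(b-a-2)}\frac{\Gamma(n+a+1)}{\Gamma(n+b-1)}$, the split-off piece being precisely the last term of \eqref{gammasum2}. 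There is no conceptual obstacle beyond this bookkeeping.
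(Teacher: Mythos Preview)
Your proof is correct. For \eqref{gammasum1} your telescoping argument is essentially the paper's own proof, only phrased forwards (exhibiting the antidifference $d_j$) rather than backwards (checking that the difference of the right-hand side at $n$ and $n-1$ returns the $n$th summand).

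For \eqref{gammasum2} the paper takes a different and somewhat slicker route: it writes $j=\sum_{k=1}^j 1$, interchanges the order of summation to obtain $\sum_{k=1}^n\sum_{j=k}^n\frac{\Gamma(j+a)}{\Gamma(j+b)}$, evaluates the inner sum by \eqref{gammasum1} (now with lower limit $k$, giving $\frac{1}{b-a-1}\bigl(\frac{\Gamma(k+a)}{\Gamma(k+b-1)}-\frac{\Gamma(n+a+1)}{\Gamma(n+b)}\bigr)$), and then applies \eqref{gammasum1} once more with $b$ replaced by $b-1$ to the remaining sum over $k$. This produces \eqref{gammasum2} directly, with no algebraic collection at the end. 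Your approach via $j\,\Gamma(j+a)=\Gamma(j+a+1)-a\,\Gamma(j+a)$ is equally valid but trades the Fubini step for the bookkeeping you describe; both routes implicitly require $b\neq a+2$ (needed for the right-hand side of \eqref{gammasum2} to make sense), and both cost about the same overall.
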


\begin{proof}[Proof of Proposition~\ref{prop:recsolutions}]
The solutions \eqref{recsolution2}, \eqref{recsolution12}, \eqref{recsolution13} and \eqref{recsolution22} immediately follow from \eqref{bnexplicitsimple} of Lemma~\ref{lemma:recursions} with $\beta=2\alpha$ and with $b$ being equal to $M_2$, $M_{1,2}$, $M_{1,3}$ and $M_{2,2}$ respectively.

To get \eqref{recsolution3}, we apply Lemma~\ref{lemma:recursions} to \eqref{recursion3} with $\beta=3\alpha$, $b_1=M_3$ and
\begin{equation}\label{bn3}\begin{aligned}
c_n&=\frac{3\alpha}n\E(\wt S_n\wt T_n)-\frac{6\alpha}nm_1\E(\wt S_n^2)+M_3\\
&=\frac{3\alpha M_{1,2}-6\alpha m_1M_2}{(2\alpha-1)\Gamma(2\alpha)}\frac{\Gamma(n+2\alpha)}{\Gamma(n+1)}
-\frac{3\alpha M_{1,2}}{2\alpha-1}+\frac{6\alpha m_1M_2}{2\alpha-1}+M_3\\
&=\frac{3\alpha M_3}{(2\alpha-1)\Gamma(2\alpha)}\frac{\Gamma(n+2\alpha)}{\Gamma(n+1)}-\frac{(\alpha+1)M_3}{2\alpha-1}
\end{aligned}\end{equation}
where we used the solutions \eqref{recsolution2} and \eqref{recsolution12} in the second equality above and the identity $M_{1,2}-2m_1M_2=M_3$ in the last one.
With this value of $c_n$, the summation on the right-hand side of \eqref{bnexplicit} is
\begin{equation}\begin{aligned}
\sum_{j=1}^{n-1}\frac{\Gamma(j+1)}{\Gamma(j+1+\beta)}c_j
&=\frac{3\alpha M_3}{(2\alpha-1)\Gamma(2\alpha)}\sum_{j=1}^{n-1}\frac{\Gamma(j+2\alpha)}{\Gamma(j+1+3\alpha)}
-\frac{(\alpha+1)M_3}{2\alpha-1}\sum_{k=1}^{n-1}\frac{\Gamma(j+1)}{\Gamma(j+1+3\alpha)}\\
&=\frac{3M_3}{(2\alpha-1)\Gamma(2\alpha)}\left(\frac{\Gamma(2\alpha+1)}{\Gamma(3\alpha+1)}-\frac{\Gamma(n+2\alpha)}{\Gamma(n+3\alpha)}\right)\\
&\qquad-\frac{(\alpha+1)M_3}{(2\alpha-1)(3\alpha-1)}\left(\frac1{\Gamma(3\alpha+1)}-\frac{\Gamma(n+1)}{\Gamma(n+3\alpha)}\right)
\end{aligned}\end{equation}
with the use of \eqref{gammasum1} from Lemma~\ref{lemma:gammasum} in the last equality.
Substituting it to the right-hand side of \eqref{bnexplicit} one arrives at \eqref{recsolution3} after the simplification of the leading term.

The proof of \eqref{recsolution112} is similar.
We have $\beta=3\alpha$, $b_1=M_{1,1,2}$ and
\begin{equation}\label{bn112}\begin{aligned}
c_n&=\frac{2\alpha}n\E(\wt S_n\wt U_n)+\frac\alpha n\E(\wt T_n^2)-\frac{4\alpha}nm_1\E(\wt S_n\wt T_n)-\frac{2\alpha}nm_2\E(\wt S_n^2)+M_{1,1,2}\\
&=\frac{\alpha(2M_{1,3}+M_{2,2}-4m_1M_{1,2}-2m_2M_2)}{(2\alpha-1)\Gamma(2\alpha)}\frac{\Gamma(n+2\alpha)}{\Gamma(n+1)}\\
&\qquad+\frac\alpha{2\alpha-1}(-2M_{1,3}-M_{2,2}+4m_1M_{1,2}+2m_2M_2)+M_{1,1,2}\\
&=\frac{3\alpha M_{1,1,2}}{(2\alpha-1)\Gamma(2\alpha)}\frac{\Gamma(n+2\alpha)}{\Gamma(n+1)}-\frac{(\alpha+1)M_{1,1,2}}{2\alpha-1}
\end{aligned}\end{equation}
by using \eqref{recsolution13}, \eqref{recsolution22}, \eqref{recsolution12} and \eqref{recsolution2} in the second equality
and the identity $2M_{1,3}+M_{2,2}-4m_1M_{1,2}-2m_2M_2=3M_{1,1,2}$ in the last step above.
Then by comparing \eqref{bn3} with \eqref{bn112} we see that the recursion for $\E(\wt S_n^2\wt T_n)$ is formally the same as that for $\E(\wt S_n^3)$
with $M_3$ replaced by $M_{1,1,2}$ which proves \eqref{recsolution112}.

Finally we show \eqref{recsolution4}.
We use Lemma~\ref{lemma:recursions} with $\beta=4\alpha$, $b_1=M_4$ and
\begin{equation}\begin{aligned}
c_n&=\frac{6\alpha}n\E(\wt S_n^2\wt T_n)+\frac{4\alpha}n\E(\wt S_n\wt U_n)-\frac{12\alpha}nm_1\left(\E(\wt S_n^3)+\E(\wt S_n\wt T_n)\right)\\
&\qquad+\left(\frac{12\alpha}nm_1^2+6M_2\right)\E(\wt S_n^2)+M_4\\
&=\frac{\alpha(24M_{1,1,2}-48m_1M_3)}{(3\alpha-1)\Gamma(3\alpha)}\frac{\Gamma(n+3\alpha)}{\Gamma(n+1)}
+\frac{6M_2^2}{(2\alpha-1)\Gamma(2\alpha)}\frac{\Gamma(n+2\alpha)}{\Gamma(n)}\\
&\qquad+\frac{\alpha(-18M_{1,1,2}+4M_{1,3}+36m_1M_3-12m_1M_{1,2}+12m_1^2M_2)}{(2\alpha-1)\Gamma(2\alpha)}\frac{\Gamma(n+2\alpha)}{\Gamma(n+1)}
-\frac{6M_2^2}{2\alpha-1}n\\
&\qquad+\frac{\alpha(\alpha+1)(6M_{1,1,2}-12m_1M_3)}{(2\alpha-1)(3\alpha-1)}+\frac{\alpha(-4M_{1,3}+12m_1M_{1,2}-12m_1^2M_2)}{2\alpha-1}+M_4\\
&=\frac{24\alpha(M_4-M_2^2)}{(3\alpha-1)\Gamma(3\alpha)}\frac{\Gamma(n+3\alpha)}{\Gamma(n+1)}
+\frac{6M_2^2}{(2\alpha-1)\Gamma(2\alpha)}\frac{\Gamma(n+2\alpha)}{\Gamma(n)}\\
&\qquad+\frac{\alpha(18M_2^2-14M_4)}{(2\alpha-1)\Gamma(2\alpha)}\frac{\Gamma(n+2\alpha)}{\Gamma(n+1)}-\frac{6M_2^2}{2\alpha-1}n
+\frac{(5\alpha+1)M_4-6\alpha(\alpha+1)M_2^2}{(2\alpha-1)(3\alpha-1)}
\end{aligned}\end{equation}
where we used the solutions \eqref{recsolution112}, \eqref{recsolution13}, \eqref{recsolution3}, \eqref{recsolution12} and \eqref{recsolution2}
in the second equality and the identities $M_{1,1,2}-2m_1M_3=M_4-M_2^2$ and $M_{1,3}-3m_1M_{1,2}+3m_1^2M_2=M_4$ in the last equality above.
Then the summation on the right-hand side of equation \eqref{bnexplicit} can be given as follows
\begin{equation}\label{reccompute4}\begin{aligned}
&\sum_{j=1}^{n-1}\frac{\Gamma(j+1)}{\Gamma(j+1+\beta)}c_j\\
&\qquad=\frac{24\alpha(M_4-M_2^2)}{(3\alpha-1)\Gamma(3\alpha)}\sum_{j=1}^{n-1}\frac{\Gamma(j+3\alpha)}{\Gamma(j+1+4\alpha)}
+\frac{6M_2^2}{(2\alpha-1)\Gamma(2\alpha)}\sum_{j=1}^{n-1}j\frac{\Gamma(j+2\alpha)}{\Gamma(j+1+4\alpha)}\\
&\qquad\qquad+\frac{\alpha(18M_2^2-14M_4)}{(2\alpha-1)\Gamma(2\alpha)}\sum_{j=1}^{n-1}\frac{\Gamma(j+2\alpha)}{\Gamma(j+1+4\alpha)}
-\frac{6M_2^2}{2\alpha-1}\sum_{j=1}^{n-1}j\frac{\Gamma(j+1)}{\Gamma(j+1+4\alpha)}\\
&\qquad\qquad+\frac{(5\alpha+1)M_4-6\alpha(\alpha+1)M_2^2}{(2\alpha-1)(3\alpha-1)}\sum_{j=1}^{n-1}\frac{\Gamma(j+1)}{\Gamma(j+1+4\alpha)}\\
&\qquad\sim\frac{24\alpha(M_4-M_2^2)}{(3\alpha-1)\Gamma(3\alpha)}\frac1\alpha\frac{\Gamma(3\alpha+1)}{\Gamma(4\alpha+1)}
+\frac{6M_2^2}{(2\alpha-1)\Gamma(2\alpha)}\frac1{2\alpha(2\alpha-1)}\frac{\Gamma(2\alpha+1)}{\Gamma(4\alpha)}\\
&\qquad\qquad+\frac{\alpha(18M_2^2-14M_4)}{(2\alpha-1)\Gamma(2\alpha)}\frac1{2\alpha}\frac{\Gamma(2\alpha+1)}{\Gamma(4\alpha+1)}
-\frac{6M_2^2}{2\alpha-1}\frac1{(4\alpha-1)(4\alpha-2)\Gamma(4\alpha)}\\
&\qquad\qquad+\frac{(5\alpha+1)M_4-6\alpha(\alpha+1)M_2^2}{(2\alpha-1)(3\alpha-1)}\frac1{(4\alpha-1)\Gamma(4\alpha+1)}\\
\end{aligned}\end{equation}
where the asymptotic equality above follows since
\begin{equation}\label{sumasympt}
\sum_{j=1}^{n-1}\frac{\Gamma(j+3\alpha)}{\Gamma(j+1+4\alpha)}\sim\frac1\alpha\frac{\Gamma(3\alpha+1)}{\Gamma(4\alpha+1)},\quad
\sum_{j=1}^{n-1}j\frac{\Gamma(j+2\alpha)}{\Gamma(j+1+4\alpha)}\sim\frac1{2\alpha(2\alpha-1)}\frac{\Gamma(2\alpha+1)}{\Gamma(4\alpha)}
\end{equation}
holds as $n\to\infty$ and from three other similar asymptotic equalities corresponding to the summations in further terms of \eqref{reccompute4}.
These asymptotics can be seen from Lemma~\ref{lemma:gammasum} by neglecting the terms vanishing in the $n\to\infty$ limit.
By substituting \eqref{reccompute4} into \eqref{bnexplicit} we see that as $n\to\infty$,
$\E(\wt S_n^4)$ is asymptotically equal to a constant times $\Gamma(n+4\alpha)/\Gamma(n)\sim n^{4\alpha}$.
The value of the constant is obtained by adding $b_1/\Gamma(4\alpha+1)=M_4/\Gamma(4\alpha+1)$ to the expression in \eqref{reccompute4}.
This verifies that vanishing terms in \eqref{reccompute4} can be disregarded.
Straghtforward simplification of the sum of $M_4/\Gamma(4\alpha+1)$ and the right-hand side of \eqref{reccompute4} yields the coefficient of $\Gamma(n+4\alpha)/\Gamma(n)$
on the right-hand side of \eqref{recsolution4} which completes the proof.
\end{proof}

\begin{proof}[Proof of Lemma~\ref{lemma:recursions}]
The formula \eqref{bnexplicit} with $n$ replaced by $n+1$ can be written as
\begin{equation}
b_{n+1}=\frac{(n+\beta)\Gamma(n+\beta)}{n\Gamma(n)}\left(\frac{b_1}{\Gamma(1+\beta)}+\sum_{j=1}^n\frac{\Gamma(j+1)}{\Gamma(j+1+\beta)}c_j\right)
=\left(1+\frac\beta n\right)b_n+c_n
\end{equation}
where the last equality follows by separating the $j=n$ term in the sum.
Since the $n=1$ case of \eqref{bnexplicit} gives back $b_1$, the first part of the lemma is proved.

In the special case of constant $c_n$, the right-hand side of \eqref{bnexplicit}
by using \eqref{gammasum1} in Lemma~\ref{lemma:gammasum} with $a=1$ and $b=1+\beta$ can be written as
\begin{equation}\begin{aligned}
b_n&=\frac{\Gamma(n+\beta)}{\Gamma(n)}
\left(\frac c{\Gamma(1+\beta)}+\frac c{\beta-1}\left(\frac1{\Gamma(1+\beta)}-\frac{\Gamma(n+1)}{\Gamma(n+\beta)}\right)\right)\\
&=\frac{\Gamma(n+\beta)}{\Gamma(n)}\frac{c\beta}{(\beta-1)\Gamma(1+\beta)}-\frac c{\beta-1}n
\end{aligned}\end{equation}
which reduces to \eqref{bnexplicitsimple} and completes the proof.
\end{proof}

\begin{proof}[Proof of Lemma~\ref{lemma:gammasum}]
The difference of the right-hand side of \eqref{gammasum1} for $n$ and for $n-1$ instead of $n$ is
\begin{equation}\begin{aligned}
\frac1{b-a-1}\left(\frac{\Gamma(n+a)}{\Gamma(n+b-1)}-\frac{\Gamma(n+a+1)}{\Gamma(n+b)}\right)
&=\frac1{b-a-1}\frac{\Gamma(n+a)}{\Gamma(n+b-1)}\left(1-\frac{n+a}{n+b-1}\right)\\
&=\frac{\Gamma(n+a)}{\Gamma(n+b)}
\end{aligned}\end{equation}
which proves \eqref{gammasum1} since both sides are $0$ for $n=0$.
To see \eqref{gammasum2} we can write $j=\sum_{k=1}^j1$ on the left-hand side.
Exchanging the order of summations gives that
\begin{equation}\begin{aligned}
\sum_{j=1}^n\frac{\Gamma(j+a)}{\Gamma(j+b)}j&=\sum_{k=1}^n\sum_{j=k}^n\frac{\Gamma(j+a)}{\Gamma(j+b)}\\
&=\sum_{k=1}^n\frac1{b-a-1}\left(\frac{\Gamma(k+a)}{\Gamma(k+b-1)}-\frac{\Gamma(n+a+1)}{\Gamma(n+b)}\right)\\
&=\frac1{b-a-1}\left(\frac1{b-a-2}\left(\frac{\Gamma(a+1)}{\Gamma(b-1)}-\frac{\Gamma(n+a+1)}{\Gamma(n+b-1)}\right)-n\frac{\Gamma(n+a+1)}{\Gamma(n+b)}\right)
\end{aligned}\end{equation}
where we used \eqref{gammasum1} in the last two steps repeatedly.
\end{proof}

\bibliography{elephant}

\begin{thebibliography}{BCR19}

\bibitem[BB16]{BB16}
E.~Baur and J.~Bertoin.
\newblock Elephant random walks and their connection to {P}\'olya-type urns.
\newblock {\em Phys. Rev. E}, 94:052134, 2016.

\bibitem[BCR19]{BCR19}
B.~Bercu, M.~L. Chabanol, and J.~J. Ruch.
\newblock Hypergeometric identities arising from the elephant random walk.
\newblock {\em J. Math. Anal. Appl.}, 480(1):123360, 2019.

\bibitem[Ber17]{Bercu17}
B.~Bercu.
\newblock A martingale approach for the elephant random walk.
\newblock {\em J. Phys. A}, 51(1):015201, 2017.

\bibitem[Ber21a]{Bertenghi21}
M.~Bertenghi.
\newblock Asymptotic normality of superdiffusive step-reinforced random walks.
\newblock {\em arXiv:2101.00906}, 2021.

\bibitem[Ber21b]{Bertoin21a}
J.~Bertoin.
\newblock Scaling exponents of step-reinforced random walks.
\newblock {\em Probab. Theory Related Fields}, 179:295--315, 2021.

\bibitem[Ber21c]{Bertoin21b}
J.~Bertoin.
\newblock {\em Universality of noise reinforced {B}rownian motions}, pages
  147--161.
\newblock Springer International Publishing, 2021.

\bibitem[Ber22a]{Bercu22}
B.~Bercu.
\newblock On the elephant random walk with stops playing hide and seek with the
  {M}ittag-{L}effler distribution.
\newblock {\em J. Stat. Phys.}, 189:12, 2022.

\bibitem[Ber22b]{Bertoin22}
J.~Bertoin.
\newblock Counting the zeros of an elephant random walk.
\newblock {\em Trans. Amer. Math. Soc.}, 375:5539--5560, 2022.

\bibitem[Bus18]{Bus18}
S.~Businger.
\newblock The shark random swim ({L}\'evy flight with memory).
\newblock {\em J. Stat. Phys.}, 172:701--717, 2018.

\bibitem[DF93]{DF93}
Z.~Drezner and N.~Farnum.
\newblock A generalized binomial distribution.
\newblock {\em Comm. Statist. Theory Methods}, 22(11):3051--3063, 1993.

\bibitem[GS21]{GS21}
A.~Gut and U.~Stadtmüller.
\newblock The number of zeros in elephant random walks with delays.
\newblock {\em Statist. Probab. Lett.}, 174:109112, 2021.

\bibitem[Hey04]{H04}
C.~C. Heyde.
\newblock Asymptotics and criticality for a correlated {B}ernoulli process.
\newblock {\em Aust. N. Z. J. Stat.}, 46(1):53–57, 2004.

\bibitem[JJQ08]{JJQ08}
B.~James, K.~James, and Y.~Qi.
\newblock Limit theorems for correlated {B}ernoulli random variables.
\newblock {\em Statist. Probab. Lett.}, 78(15):2339--2345, 2008.

\bibitem[KT19]{KT19}
N.~Kubota and M.~Takei.
\newblock Gaussian fluctuation for superdiffusive elephant random walks.
\newblock {\em J. Stat. Phys.}, 177:1157--1171, 2019.

\bibitem[ST04]{ST04}
G.~M. Sch\"utz and S.~Trimper.
\newblock Elephants can always remember: Exact long-range memory effects in a
  non-{M}arkovian random walk.
\newblock {\em Phys. Rev. E}, 70:045101, 2004.

\end{thebibliography}
\bibliographystyle{alpha}

\end{document}